\newtheorem{theorem}{Theorem}
\newtheorem{lemma}[theorem]{Lemma}
\newtheorem{proposition}[theorem]{Proposition}
\theoremstyle{definition}
\newtheorem{definition}[theorem]{Definition}
\newtheorem{example}[theorem]{Example}
\newtheorem{notation}[theorem]{Notation}
\theoremstyle{remark}
\newtheorem{remark}[theorem]{Remark}
\newcommand{\C}{\mathbb{C}}
\newcommand{\myotimes}{\mathop{\textstyle{\bigotimes}}}
\newcommand{\N}{\mathbb{N}}
\newcommand{\Tr}{\operatorname{Tr}}
\def\NN{\mathbb{N}}
\newcommand{\ds}{\displaystyle}
\newcommand{\cP}{\mathcal{P}}
\theoremstyle{definition}
\newcommand{\ab}{\allowbreak}
\newcommand{\HH}{\mathcal{H}}
\newcommand{\jj}{{\boldsymbol{j}}}
\newcommand{\inp}{{\rm in}}
\newcommand{\outp}{{\rm out}}
\newcommand{\cl}{\mathfrak{l}}
\newcommand{\cF}{\mathfrak{F}}
\newcommand{\cG}{\mathfrak{G}}
\newcommand{\HHs}{\HH^{\blacktriangleright}}
\newcommand{\HHt}{\HH^{\blacktriangleleft}}
\newcommand{\rexp}{\mathfrak{r}}
\begin{document}

\title[sharp Bounds for Graphs of Matrices]
{Sharp Bounds for Sums Associated to Graphs of Matrices}

\author{James A. Mingo}
\address{Department of Mathematics and Statistics\\
Queen's University\\ \ab
Kingston, Ontario K7L 3N6, Canada}

\author{Roland Speicher}
\email{mingo@mast.queensu.ca, speicher@mast.queensu.ca}

\thanks{The authors were supported by Discovery Grants from NSERC}

\date{\today}
\subjclass{Primary 05C90; Secondary 62E20, }

\begin{abstract}
We provide a simple algorithm for finding the optimal
upper bound for sums of products of matrix entries of the
form
\[
S_\pi(N) :=
\mathop{\sum_{j_1, \dots, j_{2m} = 1}}_{\ker \jj \geq \pi}^N 
t^{(1)}_{j_1j_2} t^{(2)}_{j_3j_4} \cdots t^{(m)}_{j_{2m-1}j_{2m}}
\]
where some of the summation indices are constrained to be
equal. The upper bound is easily obtained from a graph
$G_\pi$ associated to the constraints, $\pi$, in the sum.
\end{abstract}

\maketitle

\section{Introduction}

We want to consider sums of the form
\begin{equation}\label{eq:S-pi}
S_\pi(N):=\sum_{\substack{j_1,\dots,j_{2m}=1\\ \ker \jj\geq\pi}}^N
t^{(1)}_{j_1j_2}t^{(2)}_{j_3j_4}\cdots t^{(m)}_{j_{2m-1}j_{2m}},
\end{equation}
where $T_k=(t^{(k)}_{ij})_{i,j=1}^N$ ($k=1,\dots,m$) are
given matrices and $\pi$ is a partition of
$\{1,2,\dots,2m\}$ which constrains some of the indices
$j_1,\dots,j_{2m}$ to be the same.

The formal definition of this is given in the following notation.

\begin{notation}
1) A \emph{partition} $\pi=\{V_1,\dots,V_r\}$ of
$\{1,\dots,k\}$ is a decomposition of $\{1,\dots,k\}$ into
disjoint non-empty subsets $V_i$; the $V_i$ are called the
\emph{blocks} of $\pi$. The set of all partitions of
$\{1,\dots,k\}$ is denoted by $\cP(k)$.

2) For $\pi,\sigma\in\cP(k)$, we write $\pi\geq \sigma$ if
each block of $\pi$ is a union of some blocks of $\sigma$.

3) For a multi-index $\jj=(j_1,\dots,j_k)$ we denote by
$\ker \jj\in\cP(k)$ that partition where $p$ and $q$ are in the same block if and only if $j_p = j_q$.



\end{notation}

Thus, for a given partition $\pi\in\cP(k)$, the constraint
$\ker\jj\geq \pi$ in \eqref{eq:S-pi} means that two indices
$j_q$ and $j_p$ have to agree, whenever $q$ and $p$ are in
the same block of $\pi$. Note that we do not exclude that
more indices might agree.

The problem which we want to address is the optimal bound
of the sum \eqref{eq:S-pi}.  One expects a
bound of the form
\begin{equation}\label{eq:general}
\vert S_\pi(N)\vert \leq N^{\rexp(\pi)} \prod_{k=1}^m \Vert
T_k\Vert,
\end{equation}
for some exponent $\rexp(\pi)$, where $\|T\|$ denotes the operator norm of the matrix $T$. The question is: what is the
optimal choice of this exponent?

Our interest in sums of the form $S_\pi(N)$ was aroused by
investigations on random matrices where such sums show up
quite canonically, see \cite{MSp}. Indeed, when one considers the asymptotic properties of products of random and deterministic matrices, one has to find efficient bounds for the sums, $S_\pi(N)$, of products of entries of the deterministic matrices in order to determine their contribution to the limiting distribution.
Yin and Krishnaiah
\cite{YK}, working on the product of random matrices,
already faced this problem and obtained the first results
for some special cases; a more systematic approach was given
by Bai \cite{B}. Our investigations are inspired by the
presentation in the book of Bai and Silverstein \cite{BS}.

A first upper bound comes from the trivial observations that
we have in $S_\pi(N)$ one free summation index for each
block of $\pi$ and that $|t^{(k)}_{ij}| \leq \| T_k \|$ for
all $i,j$, and thus one clearly has \eqref{eq:general} with
$\rexp(\pi)=\vert\pi\vert$, where $|\pi|$ the number of
blocks of $\pi$. However, this is far from optimal. The main
reason for a reduction of the exponent comes from the fact
that some of the indices which appear are actually used up
for matrix multiplication and thus do not contribute a
factor of $N$. For example, for
$\sigma=\{(2,3),(4,5),\cdots,(2m,1)\}$ one has
\begin{align*}
S_\sigma(N)&=\sum_{\substack{j_1,\dots,j_{2m}=1\\
j_2=j_3,j_4=j_5,\cdots,j_{2m}=j_1}}^N t^{(1)}_{j_1j_2}t^{(2)}_{j_3j_4}\cdots
t^{(m)}_{j_{2m-1}j_{2m}}\\& = \sum_{i_1,\dots,i_m=1}^N
t^{(1)}_{i_1i_2}t^{(2)}_{i_2i_3}\cdots t^{(m)}_{i_{m}i_{1}}\\&=\Tr(T_1\cdots T_m),
\end{align*}
thus
\[
\vert S_\sigma(N)\vert \leq N \Vert T_1\cdots T_m\Vert\leq N
\prod_{k=1}^m\Vert T_k\Vert.
\]
Hence the trivial estimate $\rexp(\sigma)=m$ can here
actually be improved to $\rexp(\sigma)=1$.

Other cases, however, might not be so clear. For example,
what would one expect for
\begin{multline}\label{eq:tau}
\tau=\{(1),(2,4,11),(3,5,10),(6,7,8)\\(9,12,14,16,20),(13,15,17,18),(19,22,24),(21,23)\}.
\end{multline}
The corresponding sum $S_\tau$ is
\[
\sum_{j_1,\dots,j_{24}=1}^N
t^{(1)}_{j_1j_2}          t^{(2)}_{j_3j_4} 
t^{(3)}_{j_5j_6}          t^{(4)}_{j_7j_8}
t^{(5)}_{j_9j_{10}}       t^{(6)}_{j_{11}j_{12}} t^{(7)}_{j_{13}j_{14}}    t^{(8)}_{j_{15}j_{16}}    t^{(9)}_{j_{17}j_{18}}    t^{(10)}_{j_{19}j_{20}}   t^{(11)}_{j_{21}j_{22}}   t^{(12)}_{j_{23}j_{24}}
\]
subject to the constraints
\[\left\{
\begin{array}{l}
j_2 = j_4 = j_{11},\\   
j_3 = j_5 = j_{10}, \\
j_6 = j_7 = j_8, \\
j_9 = j_{12} = j_{14} = j_{16} = j_{20}, \\
j_{13} = j_{15} = j_{17} = j_{18}, \\
j_{19} = j_{22} = j_{24}, \\
 j_{21} = j_{23}
\end{array}\right.
\]
or, in terms of unrestricted summation indices:
\begin{equation} \label{eq:S-tau}
S_\tau=\sum_{i_1,i_2,\dots,i_8=1}^N
t^{(1)}_{i_1i_2}t^{(2)}_{i_3i_2}t^{(3)}_{i_3i_4}t^{(4)}_{i_4i_4}
t^{(5)}_{i_5i_3}t^{(6)}_{i_2i_5}t^{(7)}_{i_6i_5}t^{(8)}_{i_6i_5}t^{(9)}_{i_6i_6}
t^{(10)}_{i_7i_5}t^{(11)}_{i_8i_7}t^{(12)}_{i_8i_7}.
\end{equation}

The trivial estimate here is of order $N^8$, but it might
not be obvious at all that in fact the optimal choice is
$\rexp(\tau)=3/2$. The non-integer value in this case shows
that the problem does not just come down to a counting
problem of relevant indices.

We will show that there is an easy and beautiful algorithm
for determining the optimal exponent $\rexp(\pi)$ for any
$\pi$. Actually, it turns out that $\rexp(\pi)$ is most
easily determined in terms of a graph $G_\pi$ which is
associated to $\pi$ as follows. We start from the directed
graph $G_{0_{2m}}$ with $2m$ vertices $1,2,\dots,2m$ and
directed edges $(2,1),(4,3),\dots$, $(2m,2m-1)$. (This is
the graph which corresponds to unrestricted summation, i.e.,
to $\pi=0_{2m}$, where $0_{2m}$ is the minimal partition in
$\cP(2m)$ with $2m$ blocks, each consisting of one
element. The reason that we orient our edges in the
apparently wrong direction will be addressed in Remark
\ref{rem:orientation}.) Given a $\pi\in\cP(2m)$ we obtain
the directed graph $G_\pi$ by identifying in $G_{0_{2m}}$
the vertices which belong to the same blocks of $\pi$. We
will not identify the edges (actually, the direction of two
edges between identified vertices might be incompatible) so
that $G_\pi$ will in general have multiple edges, as well as
loops.

For example, the graph $G_\tau$ for $\tau$ from Equation
$\eqref{eq:tau}$ is given in Figure \ref{fig:G-tau}. It
should be clear how one can read off the graph $G_\tau$
directly from Equation \eqref{eq:S-tau}.

\begin{figure}\centering{
\includegraphics[width=9cm]{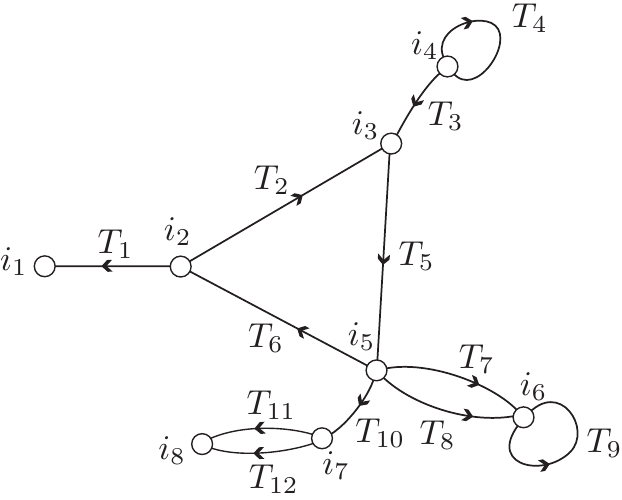}}
\caption{\label{fig:G-tau} The graph $G_\tau$ for the sum \eqref{eq:S-tau}.}
\end{figure}

The optimal exponent $\rexp(\pi)$ is then determined by the
structure of the graph $G_\pi$. Before we explain how this
works, let us rewrite the sum \eqref{eq:S-pi} more
intrinsically in terms of the graph $G=G_\pi$ as
\begin{equation}\label{eq:graph-sum-for-pi}
S_G(N):=\sum_{i : V \rightarrow [N]}\ \prod_{e\in E}\
t^{(e)}_{i_{t(e)},i_{s(e)}}.
\end{equation}
We sum over all functions $i: V \rightarrow [N]$ where $N =
\{ 1, 2, 3, \dots, N\}$, $V$ is the set of vertices of $G$, $E$ the set of edges,  and $s(e)$ and $t(e)$ denote the
source vertex and the target vertex of $e$
respectively. Note that we keep all edges through the
identification according to $\pi$, thus the $m$ matrices
$T_1,\dots,T_m$ in \eqref{eq:S-pi} show up in
\eqref{eq:graph-sum-for-pi} as the various $T_e$ for the $m$
edges of $G_\pi$.

\begin{remark}\label{rem:orientation}
Note that a factor of $t^{(l)}_{i_r i_s}$ in the sum in
(\ref{eq:graph-sum-for-pi}) produces an edge labelled $T_l$
starting at a vertex labelled $i_s$ and ending at a vertex
labelled $i_r$.
\begin{center}\includegraphics{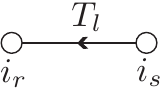}\end{center}
This reversing of the indices is an artifact of the usual
convention of writing $TS$ for the operator where one first
applies $S$ and then $T$.
\end{remark}

Clearly $\pi$ and $G_\pi$ contain the same information about
our problem; however, since the bound on $S_G(N)$ is easily
expressed in terms of $G_\pi$, we will in the following
forget about $\pi$ and consider the problem of bounding the
graph sum $S_G(N)$ in terms of $N$ for an arbitrary graph
$G$ with attached matrices. We will call a picture as in
Figure \ref{fig:G-tau} a \emph{graph of matrices}; for a
precise definition, see Definition
\ref{def:graph-of-matrices}.

\begin{example}
In the figures below we give four directed graphs and below
each graph the corresponding graph sum. One can see that if
the graph is a circuit then the graph sum is a trace of the
product of the matrices. However for more general graphs the
graph sum cannot easily be written in terms of traces.
Nevertheless, as Theorem \ref{thm:graph-sum} will show, there
is a simple way to understand the dependence of the graph
sum on $N$, the size of the matrices.

\begin{center}
$\vcenter{\hsize150pt%
\begin{center}
\includegraphics{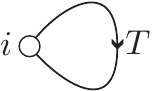}\\

\medskip

$\ds\Tr(T) = \sum_i t_{ii}$
\end{center}}$
$\vcenter{\hsize150pt%
\begin{center}
\includegraphics{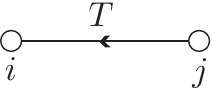}\\

\medskip

$\ds\sum_{i,j} t_{ij}$
\end{center}}$

\bigskip

$\vcenter{\hsize150pt%
\begin{center}
\includegraphics{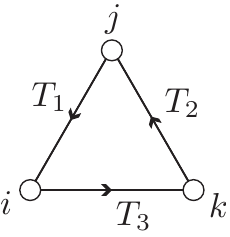}\\

\medskip

$\ds\Tr(T_1T_2T_3) = \sum_{i,j,k} t^{(1)}_{ij} t^{(2)}_{jk}
t^{(3)}_{ki}$
\end{center}}$
$\vcenter{\hsize150pt%
\begin{center}
\includegraphics{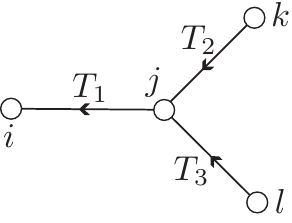}\\

\medskip

$\ds\sum_{i,j,k,l} t^{(1)}_{ij} t^{(2)}_{jk} t^{(3)}_{jl}$
\end{center}}$
\end{center}

\end{example}

The relevant feature of the graph is the structure of its
two-edge connected components.

\begin{notation}
1) A \emph{cutting edge} of a connected graph is an edge
whose removal would result in two disconnected subgraphs. A
connected graph is \emph{two-edge connected} if it does not
contain a cutting edge, i.e., if it cannot be cut into
disjoint subgraphs by removing one edge. A \emph{two-edge
  connected component} of a graph is a subgraph which is
two-edge connected and cannot be enlarged to a bigger
two-edge connected subgraph.

2) A \emph{forest} is a graph without cycles. A \emph{tree}
is a component of a forest, i.e., a connected graph without
cycles. A tree is \emph{trivial} if it consists of only one
vertex. A \emph{leaf} of a non-trivial tree is a vertex
which meets only one edge. The sole vertex of a trivial tree
will also be called a \emph{trivial leaf}.

\end{notation}

It is clear that if one takes the quotient of a graph with
respect to the two-edge connectedness relation (i.e., one
shrinks each two-edge connected component of a graph to a
vertex and just keeps the cutting edges), then one does not
have cycles any more, thus the quotient is a forest.

\begin{figure}\centering{
\includegraphics[width=5cm]{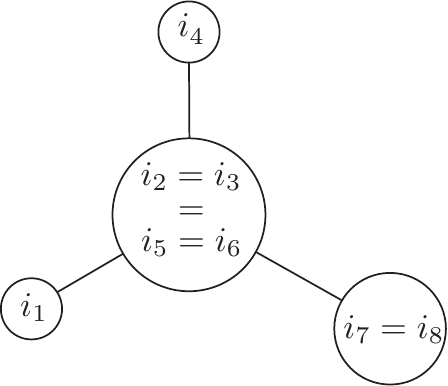}}
\caption{\label{fig:tau-forrest} The quotient graph
  $\cF(G_\tau)$ of Figure \ref{fig:G-tau}; the forest here
  consists of just one tree.}
\end{figure}

\begin{notation}
For a graph $G$ we denote by $\cF(G)$ its \emph{forest of
  two-edge connected components}: the vertices of $\cF(G)$
consist of the two-edge connected components of $G$ and two
distinct vertices of $\cF(G)$ are connected by an edge if
there is a cutting edge between vertices from the two
corresponding components in $G$.
\end{notation}

For the graph from Figure \ref{fig:G-tau} the corresponding
forest $\cF(G_\tau)$ is drawn in Figure
\ref{fig:tau-forrest}.

Now we can present our main theorem on bounds for sums of
the form \eqref{eq:graph-sum-for-pi}. In the special case of
a two-edge connected graph we obtain the same bound as
appears in the book of Bai and Silverstein \cite{BS}. In the
general case, however, our bound is less than that of \cite{BS}.

\begin{theorem}\label{thm:graph-sum}
$1)$ Let $G$ be a directed graph, possibly with multiple edges
and loops. Let for each edge $e$ of $G$ be given an $N\times
N$ matrix $T_e=(t^{(e)}_{ij})_{i,j=1}^N$.  Let $E$ and $V$,
respectively, be the edges and vertices of $G$ and
\begin{equation}\label{eq:graph-sum-for-pi-bis}
S_G(N) :=
\sum_{i:V \rightarrow [N]}\  \prod_{e \in E}\ 
t^{(e)}_{i_{t(e)},i_{s(e)}}
\end{equation}
where the sum runs over all functions $i : V \rightarrow
[N]$.  Then
\begin{equation}\label{eq:norm-sum-estimate}
\vert S_G(N)\vert \leq N^{\rexp(G)}\cdot \prod_{e\in E}\Vert T_e\Vert,
\end{equation}
where $\rexp(G)$ is determined as follows from the structure
of the forest $\cF(G)$ of two-edge connected components of
$G$:
$$\rexp(G)=\sum_{\text{\rm $\cl$ leaf of $\cF(G)$}} \rexp(\cl)$$
where
$$\rexp(\cl):=\begin{cases}
1,&\text{if $\cl$ is a trivial leaf}\\
\frac 12,&\text{if $\cl$ is a leaf of a non-trivial tree}
\end{cases}$$

$2)$ The bound in Equation \ref{eq:norm-sum-estimate} is
optimal in the following sense. For each graph $G$ and each
$N\in\NN$ there exist $N \times N$ matrices $T_e$ with
$\Vert T_e\Vert=1$ for all $e\in E$ such that
$$ S_G(N)=N^{\rexp(G)}.$$
\end{theorem}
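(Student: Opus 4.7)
The plan splits into the upper bound (1) and the optimality (2). For (1), I intend an inductive argument that strips leaves of $\cF(G)$ one at a time via Cauchy-Schwarz, with the base case being two-edge connected graphs. For (2), I will give an explicit construction with rank-one matrices.

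\emph{Base case of (1).} If $G$ is connected and two-edge connected, then $|S_G(N)| \leq N \prod_e \|T_e\|$. I would prove this via an ear decomposition. Every two-edge connected graph is built from a starting cycle by successively attaching ears (paths joining two existing vertices). On the starting cycle, $S_G(N) = \Tr(T_1^{\epsilon_1} \cdots T_m^{\epsilon_m})$, with $\epsilon_j \in \{1, *\}$ accounting for the direction in which each edge is traversed, which is bounded by $N\prod_e \|T_e\|$. Attaching an ear between two existing vertices $u,v$ with $k-1$ new internal vertices and $k$ new edges: summing out the new indices contracts the ear factors into a single effective matrix element $(T^{\text{ear}})_{i_v, i_u}$ with $\|T^{\text{ear}}\| \leq \prod_j \|T_{e_j}\|$, reducing the problem to a smaller two-edge connected graph.

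\emph{Inductive step of (1).} Let $\cl$ be a leaf of a non-trivial tree in $\cF(G)$, corresponding to a 2-edge connected component $H$ attached to the rest $R$ via the unique cutting edge $e$ with endpoints $u \in H$, $v \in R$. Writing
\begin{equation*}
S_G(N) = \sum_{i_u, i_v} t^{(e)}_{i_v, i_u} f_H(i_u) f_R(i_v)
\end{equation*}
and applying Cauchy-Schwarz gives $|S_G(N)| \leq \|T_e\| \cdot \|f_H\|_2 \cdot \|f_R\|_2$, where $\|f_H\|_2^2 = S_{H^{(u)}}(N)$ and $\|f_R\|_2^2 = S_{R^{(v)}}(N)$; here $H^{(u)}$ (respectively $R^{(v)}$) is obtained by gluing two copies of $H$ (respectively $R$) along $u$ (respectively $v$), with adjoint matrices on the second copy. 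Since vertex-gluing preserves two-edge connectedness, $H^{(u)}$ is two-edge connected and the base case gives $\|f_H\|_2 \leq \sqrt{N} \prod_{e' \in E(H)} \|T_{e'}\|$. Applying the inductive hypothesis to $R^{(v)}$ then closes the recursion, once we verify the exponent identity $\rexp(R^{(v)}) = 2\rexp(G) - 1$. This identity reduces to a case analysis on the tree $\cT$ of $\cl$ in $\cF(G)$: if $\cT$ has $\lambda$ leaves, then removal of $\cl$ followed by doubling at $v$ replaces $\cT$ in $\cF(R^{(v)})$ by a tree with $2(\lambda-1)$ non-trivial leaves (or a single trivial leaf if $\lambda = 2$), while every other tree of $\cF(G)$ is duplicated. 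The main obstacle I anticipate is choosing the right induction parameter: since the number of cutting edges may grow under doubling, one must either iterate the Cauchy-Schwarz step in a carefully controlled way or invoke sharper Hölder-type estimates (of the form $\sum_i |f_1(i)|^2 |f_2(i)|^2 \leq \|f_1\|_\infty^2 \|f_2\|_2^2$ with $\|f\|_\infty, \|f\|_2 \leq \sqrt{N}\|T\|$) on the doubled graphs to close the recursion.

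\emph{Part (2).} Designate each two-edge connected component $H$ of $G$ as \emph{internal} if it corresponds to a non-leaf vertex of $\cF(G)$, and as \emph{outer} otherwise (either a trivial or non-trivial leaf). Take $T_e = I$ on every edge $e$ internal to some 2ecc, which forces all vertices within each 2ecc to share a common index. For each cutting edge $e$ with $s(e) \in H_1$, $t(e) \in H_2$, take the rank-one operator $T_e = \xi_{H_2} \xi_{H_1}^*$, where $\xi_H = \mathbf{e}_1$ (the first standard basis vector) if $H$ is internal and $\xi_H = \mathbf{1}/\sqrt{N}$ (the normalized all-ones vector) if $H$ is outer; each such $T_e$ has operator norm $1$. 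Evaluating $S_G(N)$: the common index of each internal 2ecc is pinned to $1$ by the rank-one factors on cutting edges, each outer 2ecc's common index ranges freely over $[N]$ and contributes a factor of $N$, and each cutting edge contributes a factor of $N^{-1/2}$ per outer endpoint. Collecting the exponents,
\begin{equation*}
\log_N S_G(N) = (\#\text{ leaves of }\cF(G)) - \tfrac12(\#\text{ non-trivial leaves of }\cF(G)) = \rexp(G),
\end{equation*}
which exhibits the required matching lower bound.
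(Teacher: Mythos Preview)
Your Part~(2) is correct and is essentially the paper's construction (identity on non-cutting edges, suitable rank-one matrices on the cutting edges); only the bookkeeping differs.

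The gap is in Part~(1), and you have already put your finger on it: the induction does not close. After Cauchy--Schwarz you must bound $S_{R^{(v)}}(N)$, but $R^{(v)}$ has roughly twice as many cutting edges and leaves as $G$, so no natural complexity measure decreases. Your exponent identity $\rexp(R^{(v)})=2\rexp(G)-1$ is correct, but it is of no use without an a~priori bound on the \emph{larger} graph $R^{(v)}$. Neither of your proposed repairs is an argument as written: ``iterate carefully'' is a hope, and the H\"older route would require an $\ell^\infty$ bound of the type $\|f_H\|_\infty\le\prod_{e\in H}\|T_e\|$ for two-edge connected $H$ with a marked vertex, which in turn amounts to an operator-norm estimate for graph sums with designated input/output vertices --- at which point you are rederiving the paper's machinery from scratch. (There is also a smaller wrinkle in your base case: length-$1$ ears do not contract, and two parallel edges between $u,v$ produce a Hadamard-product entry $t^{(e)}_{i_vi_u}t^{(e')}_{i_vi_u}$ whose associated operator norm is \emph{not} bounded by $\|T_e\|\,\|T_{e'}\|$ in general, so ``replace the ear by one edge'' fails there.)

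The paper sidesteps the doubling problem entirely by never collapsing the partial sum to a scalar. It first modifies $G$ --- reversing some arrows and splitting some vertices with identity edges inserted --- into an acyclic \emph{input-output} graph whose sources and sinks sit one in each leaf of $\cF(G)$. The graph sum then becomes $\langle\bigotimes_{w\in V_{\outp}}\xi^w,\;T_\cG\;\bigotimes_{v\in V_{\inp}}\xi^v\rangle$ for a single operator $T_\cG$ that factors as a composition of tensor products of the $T_e$ with explicit partial isometries at the internal vertices; this gives $\|T_\cG\|\le\prod_e\|T_e\|$ directly, with no recursion. Cauchy--Schwarz is applied exactly once at the end, and each source or sink contributes one factor of $\sqrt{N}$. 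Your ear-decomposition idea does reappear, but only in the combinatorial lemma showing that such an input-output modification always exists with the sources and sinks placed in the leaves of $\cF(G)$.
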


\begin{example}\label{example}
Consider again our example $S_\tau$ from
\eqref{eq:S-tau}. Its forest $\cF(G_\tau)$, given in Figure
\ref{fig:tau-forrest}, consists of one tree with three
leaves; thus Theorem \ref{thm:graph-sum} predicts an order
of $N^{3/2}$ for the sum \eqref{eq:S-tau}.  In order to see
that this can actually show up (and thus give the main idea
for the proof of optimality), put all the matrices in Figure
\ref{fig:G-tau} for the non-cutting edges equal to the
identity matrix; then the problem collapses to the
corresponding problem on the tree, where we are just left
with the four indices $i_1,i_2,i_4,i_7$ and the three
matrices $T_1$, $T_3$, $T_{10}$. See Figure \ref{fig:G-in-out-1}.

\begin{figure}\centering{
\includegraphics[width=3cm]{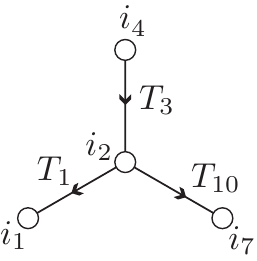}}
\caption{\label{fig:G-in-out-1} Putting the non-cutting edge
  matrices equal to the identity matrix reduces the problem
  for $G_\tau$ of Figure \ref{fig:G-tau} to this one.}
\end{figure}

The corresponding sum is
\[
S = \sum_{i_1, i_2, i_4, i_7 = 1}^N t^{(1)}_{i_1i_2}
t^{(3)}_{i_2i_4} t^{(10)}_{i_7i_2}
\]
Let $V$ now be the matrix
\begin{equation}\label{eq:V}
V=\frac 1{\sqrt N}\begin{pmatrix}
1 & 0 & \cdots & 0\\
1 & 0 & \cdots & 0\\
\vdots& \vdots&\ddots&\vdots\\
 1 & 0 & \cdots & 0
\end{pmatrix};
\end{equation}
and put $T_3=V^t$, $T_1=T_{10}=V$. Then $\Vert
T_1\Vert=\Vert T_3\Vert=\Vert T_{10}\Vert =1$ and we have
for this case

\[
S=\frac 1{N^{3/2}} \sum_{i_1,i_2,i_4,i_7=1}^N
\delta_{i_21}\delta_{i_21}\delta_{i_21}=\frac 1{N^{3/2}}
N^3=N^{3/2}.
\]
\end{example}

Note that each tree of the forest $\cF(G)$ makes a
contribution of at least 1 in $\rexp(G)$, because a
non-trivial tree has at least two leaves. One can also make
the above description more uniform by having a factor $1/2$
for each leaf, but counting a trivial leaf as actually two
leaves. (The reason for this special role of trivial leaves
will become apparent in the proof of Theorem
\ref{thm:graph-sum} in the next section.) Note also that the
direction of the edges plays no role in the estimate
above. The direction of an edge is only important in order
to define the contribution of an edge to the graph sum. One
direction corresponds to the matrix $T_e$, the other
direction corresponds to the transpose $T^t_e$. Since the
norm of a matrix is the same as the norm of its transpose,
the estimate is the same for all graph sums which correspond
to the same undirected graph.

Finally, we want to give an idea of our strategy for the
proof of Theorem \ref{thm:graph-sum}.  One of the main steps
consists in modifying the given graph of matrices (by
reversing some orientations, and by splitting some vertices
into two) in such a way that the corresponding sum $S_G(N)$
is not changed and such that the modified graph has the
structure of an input-output graph. By the latter we mean
that we have a consistent orientation of the graph from some
input vertices to some output vertices, see Definition
\ref{def:input-output}.
\begin{figure}\centering{
\includegraphics[width=6cm]{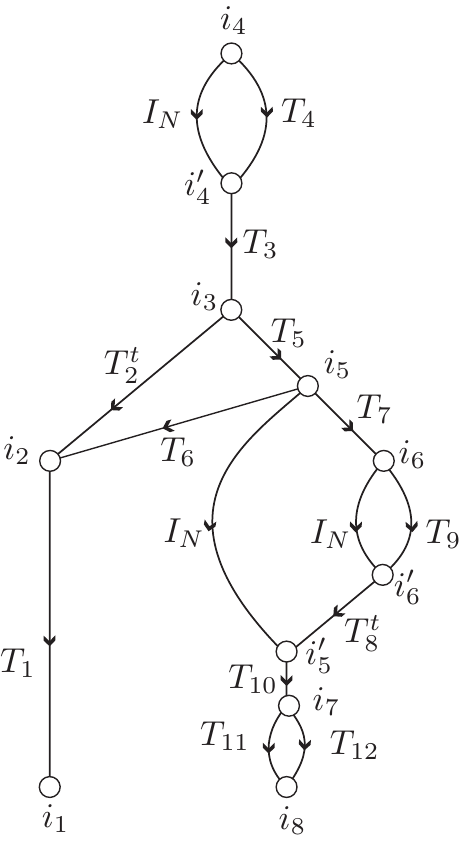}}
\caption{\label{fig:G-in-out} A modification of the graph
  $G_\tau$ from Figure \ref{fig:G-tau} in input-output
  form. Note that the input vertex $i_4$ and the output
  vertices $i_1$ and $i_8$ are chosen from the leaves of
  $\cF(G_\tau)$.}
\end{figure}

For example, a suitable modification of the graph $G_\tau$
is presented in Figure \ref{fig:G-in-out}. We have reversed
the orientation of two edges (but compensated this by taking
the adjoint of the attached matrices) and also split each of
the vertices $i_4$, $i_5$, $i_6$ into two copies. To take
care of the fact that in the summation we must have $i_4 =
i_4'$ we have added an additional edge between $i_4$ and
$i_4'$ with the identity matrix attached and similarly for
$i_5$ and $i_5'$ and $i_6$ and $i_6'$. So in order to obtain
a bound for $S_\tau$ it suffices to obtain a bound for the
graph $G$ from Figure \ref{fig:G-in-out}. But this has now a
kind of linear structure, with $i_4$ as input vertex and
$i_1$ and $i_8$ as output vertices. This structure allows us
to associate to the graph $G$ an operator $T_G$, which is
described in terms of tensor products of the maps $T_e$ and
partial isometries describing the splittings at the internal
vertices. $T_G$ maps from the vector space associated to
$i_4$ to the tensor product of the vector spaces associated
to $i_1$ and $i_8$. It is then fairly easy to see that the
norm of $T_G$ is dominated by the product of the norms of
the involved operators $T_e$; and the estimate for the sum
$S_G(N)$ is finally just an application of the
Cauchy-Schwarz inequality, where each of the input and
output vertices gives a factor $N^{1/2}$.

The rest of the paper is organized as follows. In Section
\ref{sect:two}, we formulate a slight generalization of our
theorem to rectangular matrices and introduce abstractly the
notion of a graph of matrices. Section \ref{sect:three}
deals with input-output graphs and the norm estimates for
their associated operators. In Section \ref{sect:four}, we
address the general case by showing how one can modify a
general graph of matrices to become an input-output
graph. Finally, in Section \ref{sect:five}, we generalize
the considerations from Example \ref{example} to show the
optimality of our choice for $\rexp(G)$.


\section{Generalization to Rectangular Matrices}\label{sect:two}
Let us first formalize the input information for Theorem
\ref{thm:graph-sum}. We will deal here with the more general
situation of rectangular instead of square matrices. In
order for the graph sum to make sense we require that for a
given vertex $v$ all the matrices associated with an
incoming edge have the same number of rows, $N_v$ and
likewise all the matrices associated with an outgoing edge
have the same number of columns $N_v$. Moreover we shall
find it advantageous to treat the matrices as linear
operators between finite dimensional Hilbert spaces. So for
each vertex $v$ let $\HH_v = \C^{N_v}$ have the standard
inner product and let $\{\xi_1, \dots , \xi_{N_v} \}$ be the
standard orthonormal basis of $\C^{N_v}$.  Note that we use
the convention that inner products $(x,y)\mapsto \langle
x,y\rangle$ are linear in the second variable and we shall
use Dirac's bra-ket notation for rank one operators;
$|\xi\rangle\langle\eta|\, (\mu) = \langle \eta, \mu \rangle
\xi$.

\begin{definition}\label{def:graph-of-matrices}
A \emph{graph of matrices} consists of a triple
$\cG=(G,(\HH_v)_{v\in V},\ab(T_e)_{e\in E})$ in which 
\begin{enumerate}
\item $G = (V, E)$ is a directed graph (possibly with
  multiple edges and loops),
\item 
 $\HH_v$  is a finite dimensional Hilbert space  equal to $ \C^{N_v}$ and

\item    $T_e:\HH_{s(e)}\to\HH_{t(e)}$ is a linear operator.
\end{enumerate} (This is also known as a representation of a quiver, but we shall not need this terminology.)
\end{definition}

Here is the generalization of Theorem \ref{thm:graph-sum} to
the case of a rectangular matrices.

\begin{theorem}\label{thm:graph-sum-general}
Let  $\cG=(G,(\HH_v)_{v \in V},(T_e)_{e \in E})$ be a graph of matrices. Let
\begin{equation}\label{eq:graph-sum-general}
S(\cG):=\sum_{i: V \rightarrow \N} \ \prod_{e\in E}
\langle \xi_{i_{t(e)}},T_e\xi_{i_{s(e)}}\rangle.
\end{equation}
where the sum runs over all functions $i: V \rightarrow \N$
such that for each $v \in V$ we have $1 \leq i(v) \leq N_v$.

Let $\cF=\cF(G)$ be the forest of two-edge connected
components of $G$. Then
\begin{equation}\label{eq:norm-sum-estimate-general}
\vert S(\cG)\vert\leq \prod_{\text{\rm leaf $\cl$ of $\cF$}}
\left(\max_{v\in\cl}\dim\HH_v\right)^{\rexp(\cl)}\cdot \prod_{e\in E}\Vert T_e\Vert,
\end{equation}
where, for a leaf $\cl$, $v$ runs over all vertices in the
two edge connected component of $G$ corresponding to $\cl$,
and where
$$\rexp(\cl):=\begin{cases}
1,&\text{if $\cl$ is a trivial leaf,}\\
\frac 12,&\text{if $\cl$ is a leaf of a non-trivial tree.}
\end{cases}
$$
\end{theorem}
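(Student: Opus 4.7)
The plan is to follow the strategy outlined after Example \ref{example}: reshape the graph of matrices into input-output form, read off an associated operator, bound its operator norm, and finish with Cauchy--Schwarz. First I would make two easy reductions. If $G$ has several connected components then both $S(\cG)$ and the right hand side of \eqref{eq:norm-sum-estimate-general} factor over the components, so I may assume $G$ is connected; then $\cF(G)$ is a single tree. If that tree is trivial (so $G$ itself is two-edge connected), I pick a vertex $v_0\in V$ realizing $\max_v \dim\HH_v$, modify $G$ into a graph with $v_0$ as unique input and unique output (reversing some edges at the cost of taking adjoints $T_e^*$, which preserves norms, and splitting at internal vertices), recognize $S(\cG)$ as $\tr(T_G)$ for the resulting operator $T_G\colon\HH_{v_0}\to\HH_{v_0}$, and combine $|\tr(T_G)|\leq(\dim\HH_{v_0})\Vert T_G\Vert$ with $\Vert T_G\Vert\leq\prod_e\Vert T_e\Vert$ to obtain exactly the exponent $1$ predicted for the trivial leaf.

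In the general case $\cF(G)$ is a non-trivial tree with $k\geq2$ leaves. For each leaf $\cl$, choose a vertex $v_\cl$ in the associated two-edge connected component realizing $\max_{v\in\cl}\dim\HH_v$. Designate one leaf $\cl_{\rm in}$ as the input side and the remaining $k-1$ as outputs. Reversing some orientations (with adjoint compensation on the attached matrices) and splitting each internal vertex into two copies joined by an identity edge (which does not alter $S(\cG)$, since it only records the equality $i(v)=i(v')$ already present in the constrained sum) produces an input-output graph in the sense of Definition \ref{def:input-output}, with single input vertex $v_{\cl_{\rm in}}$ and output vertices $\{v_\cl:\cl\neq\cl_{\rm in}\}$. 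This graph naturally defines an operator
\[
T_G\colon\HH_{v_{\cl_{\rm in}}}\;\longrightarrow\;\myotimes_{\cl\neq\cl_{\rm in}}\HH_{v_\cl}
\]
assembled from the $T_e$ by composition and tensor product, with canonical partial-isometry splittings at internal vertices. Since these isometries have norm $1$, one reads off $\Vert T_G\Vert\leq\prod_e\Vert T_e\Vert$. Writing $S(\cG)=\sum\langle\xi_{\rm out},T_G\xi_{\rm in}\rangle$ summed over the standard basis vectors at the input and output vertices, and applying Cauchy--Schwarz in each input/output factor, gives
\[
|S(\cG)|\leq\Vert T_G\Vert\cdot(\dim\HH_{v_{\cl_{\rm in}}})^{1/2}\cdot\prod_{\cl\neq\cl_{\rm in}}(\dim\HH_{v_\cl})^{1/2},
\]
which is exactly the bound \eqref{eq:norm-sum-estimate-general} with exponent $1/2$ per leaf.

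I expect the main obstacle to be the construction in step three: given an arbitrary connected graph of matrices, producing the reversals and splittings needed to obtain a bona fide input-output graph with the prescribed input and output leaves while leaving $S(\cG)$ unchanged. Within a two-edge connected component there are necessarily cycles, so no single global orientation from input to output can be consistent; the role of the internal-vertex splittings is precisely to cut each such cycle into an acyclic input-output piece, and the role of the adjoint substitution is to reconcile locally incompatible edge directions. Carrying this out systematically, and in such a way that the resulting $T_G$ is a transparent composition of tensor products of the $T_e$ with canonical isometries (so that the norm bound is immediate), is the delicate step; once it is in place, the trace estimate for trivial leaves and the closing Cauchy--Schwarz for non-trivial leaves are routine.
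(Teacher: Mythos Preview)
Your proposal is correct and follows the same strategy as the paper: reduce to connected components, modify each to an input-output graph, bound the norm of the associated operator by $\prod_e\Vert T_e\Vert$, and apply Cauchy--Schwarz at the input/output vertices; the paper's Sections~3 and~4 carry out precisely the modification you flag as the delicate step (an ear-decomposition within each two-edge connected component, borrowed from Bai--Silverstein), together with the verification that reversing arrows and splitting vertices with attached identity edges preserves $S(\cG)$, the forest $\cF(G)$, and the product of edge norms. Two minor remarks: reversing an edge must replace $T_e$ by its \emph{transpose} $T_e^t$ rather than its adjoint $T_e^*$ if $S(\cG)$ is to be literally unchanged (the norms agree either way, but the entries do not); and your trace bound $|\tr T_G|\le(\dim\HH_{v_0})\Vert T_G\Vert$ for the trivial-leaf case is a clean variant of what the paper does, namely split $v_0$ into a separate input and output vertex and collect two factors of $(\dim\HH_{v_0})^{1/2}$ from Cauchy--Schwarz.
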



\section{Estimate for Input-Output Graphs}\label{sect:three}

The main idea for proving the estimate
\eqref{eq:norm-sum-estimate-general} for a graph of matrices
is to first suppose that there is a flow from some vertices
designated \emph{input vertices}, $V_\inp$, to some other vertices
designated \emph{output vertices}, $V_\outp$, and then to show that
every graph can be modified to have such a flow.  All the
remaining vertices, which are neither input nor output
vertices, will be called \emph{internal vertices}. 

\begin{definition}\label{def:input-output}
Let $G$ be a directed graph (possibly with multiple
edges). We say that $G$ is an \emph{input-output graph} if
there exists two disjoint non-empty subsets, $V_{\inp}$ and
$V_{\outp}$, of the set of vertices of $G$ such that the
following properties are satisfied. \begin{itemize}
\item
$G$ does not contain a directed cycle. (Recall that a cycle
  is a closed path and that a path is directed if all the
  edges are oriented in the same direction.)

\item
Each vertex of $G$ lies on a directed path from some vertex
in $V_{\inp}$ to some vertex in $V_{\outp}$.
\item
Every internal vertex has at least one incoming edge and at
least one outgoing edge.
\item
Every input vertex has only outgoing edges and every output
vertex has only ingoing edges.
\end{itemize}
\end{definition}

Recall that $\{\xi_i \}_{i=1}^{N_v}$ is an orthonormal basis
for $\HH_v$. Let $V_0 \subset V$ be a subset, suppose that
we have a function $i: V_0 \rightarrow \N$ such that $i(v)
\leq N_v$ then for each $v \in V_0$, $\xi_{i_v}$ is an
element of our orthonormal basis of $\HH_v$. Thus an element
of our orthonormal basis of $\bigotimes_{v \in V_0} \HH_v$
is specified by a function $i : V_0 \rightarrow \N$ such
that $i(v) \leq N_v$ for each $v$ in $V_0$. When it is clear
from the context we shall just say that a basis element of
$\bigotimes_{v \in V_0} \HH_v$ is specified by a function
$i: V_0 \rightarrow \N$, but it should always be understood
that $i(v) \leq N_v$.

Hence if we form $\{ \otimes_{v \in V_0} \xi_{i_v} \}_i$
where $i$ runs over all functions $i: V_0 \rightarrow \N$,
we obtain an orthonormal basis of $\bigotimes_{v \in V_0}
\HH_v$. Thus an operator $T_{\cG}: \bigotimes_{v \in
  V_{\inp}} \HH_v \rightarrow \bigotimes_{w \in V_{\outp}}
\HH_w$ can be specified by giving
\[
\big\langle \myotimes_{w\in V_{\outp}}
\xi_{j_w},T_{\cG}\big(\myotimes_{v\in V_{\inp}}
\xi_{i_v}\big) \big\rangle
\]
for each basis vector $i: V_{\inp} \rightarrow \N$ and $j:
V_{\outp} \rightarrow \N$. In the theorem below we shall
show that a certain kind of graph sum can be written in
terms of a vector state applied to an operator defined by
the inner product above.  This is the first of two key steps
in proving Theorem~\ref{thm:graph-sum-general}.

\begin{theorem}\label{thm:graph-norm-estimate}
Let $\cG=(G,(\HH_v)_{v\in V},(T_e)_{e\in E})$ be a graph of
matrices and assume that $G$ is an input-output
graph with input vertices $V_\inp$ and output vertices
$V_\outp$.

\medskip\noindent 
$1)$ 
We define $T_{\cG}: \bigotimes_{v\in V_{\inp}}\HH_v \to
\bigotimes_{w \in V_{\outp}} \HH_w$ by
\begin{equation}\label{eq:graph-operator}
\langle 
\myotimes_{w\in V_{\outp}}
\xi_{j_w},T_{\cG}\big(\myotimes_{v\in V_{\inp}}
\xi_{i_v}\big) \rangle :=\sum_{k:V \rightarrow
  \N} \prod_{e\in E} \langle
\xi_{k_{t(e)}},T_e\xi_{k_{s(e)}}\rangle,
\end{equation}
where $i: V_{\inp} \rightarrow \N$, $j:
V_{\outp} \rightarrow \N$ and $k$ runs over all maps $k: V \rightarrow
\N$ such that $k|_{V_{\inp}} = i$ and $k|_{V_{\outp}} = j$.

Then we have
\begin{equation}\label{eq:graph-norm-estimate}
\Vert T_\cG\Vert\leq \prod_{e\in E}\Vert T_e\Vert.
\end{equation}

\noindent
$2)$ 
For the graph sum \eqref{eq:graph-sum-general} we have
\[
S(\cG)=\langle  \myotimes_{w\in V_{\outp}}
\xi^w,T_\cG\myotimes_{v\in V_{\inp}} \xi^v\rangle.
\]
where $\xi^v = \xi_1 + \cdots + \xi_{N_v} \in \HH_v$, and we have the
estimate
\begin{equation}\label{eq:graph-sum-input-output}
\vert S(\cG)\vert\leq \prod_{v\in V_\inp\cup V_\outp}
\dim(\HH_v)^{1/2}\cdot \prod_{e\in E}\Vert T_e\Vert.
\end{equation}
\end{theorem}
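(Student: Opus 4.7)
The plan is to construct $T_\cG$ explicitly by processing the vertices of $G$ in topological order, attaching an elementary operator at each step, and then to derive Part 2 from Part 1 by recognizing $S(\cG)$ as a matrix coefficient of $T_\cG$ and invoking Cauchy--Schwarz. The building blocks are as follows. For a Hilbert space $\HH$ with orthonormal basis $\{\xi_i\}$ and $r\geq 1$, let $\Delta_r\colon\HH\to\HH^{\otimes r}$ be the isometric \emph{duplication map} $\xi_i\mapsto\xi_i^{\otimes r}$; its adjoint $\Delta_r^*$ has norm $1$ and sends $\xi_{i_1}\otimes\cdots\otimes\xi_{i_r}$ to $\xi_{i_1}$ if $i_1=\cdots=i_r$ and to $0$ otherwise. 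To an input vertex $v$ with $q$ outgoing edges I attach $\Delta_q\colon\HH_v\to\HH_v^{\otimes q}$; to an output vertex with $p$ incoming edges I attach $\Delta_p^*\colon\HH_v^{\otimes p}\to\HH_v$; and to an internal vertex with $p$ incoming and $q$ outgoing edges I attach $D_v:=\Delta_q\Delta_p^*\colon\HH_v^{\otimes p}\to\HH_v^{\otimes q}$, which has norm $\leq 1$ and acts on basis vectors by $\xi_{i_1}\otimes\cdots\otimes\xi_{i_p}\mapsto\xi_{i_1}^{\otimes q}$ when all $i_j$ agree and by $0$ otherwise.

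Since $G$ is acyclic it admits a topological ordering $v_1,\dots,v_n$ of its vertices. I process vertices in this order while maintaining a frontier of active edges: after processing $v_1,\dots,v_k$ the frontier is $\{e\in E : s(e)\in\{v_1,\dots,v_k\},\ t(e)\notin\{v_1,\dots,v_k\}\}$, and the current state space is $\bigotimes_e \HH_{s(e)}$, the product being over edges in the frontier. To process $v_{k+1}$: first, for each edge $e$ incoming to $v_{k+1}$ (necessarily in the frontier by the ordering), apply $T_e$ on the corresponding factor, turning $\HH_{s(e)}$ into $\HH_{v_{k+1}}$; then apply the vertex operator attached to $v_{k+1}$, which collapses those factors and produces one copy of $\HH_{v_{k+1}}$ per outgoing edge. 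Extending each elementary operator by the identity on untouched frontier factors and composing yields an operator $T_\cG\colon\bigotimes_{v\in V_\inp}\HH_v\to\bigotimes_{w\in V_\outp}\HH_w$.

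A direct inductive computation of the matrix coefficients of $T_\cG$ in the standard bases — inserting resolutions of the identity between successive elementary operators and using that each $\Delta^*$ factor imposes Kronecker deltas forcing a single index value at the corresponding internal vertex — shows that the matrix elements coincide with the right-hand side of \eqref{eq:graph-operator}. Because operator norms are submultiplicative and tensor-multiplicative and each duplication, contraction, and internal rewiring has norm at most $1$, we obtain $\|T_\cG\|\leq\prod_{e\in E}\|T_e\|$, which is \eqref{eq:graph-norm-estimate}. For Part 2, expanding $\langle\bigotimes_w\xi^w,T_\cG\bigotimes_v\xi^v\rangle$ by multilinearity using $\xi^v=\sum_{i=1}^{N_v}\xi_i$ gives $\sum_{i,j}\langle\bigotimes_w\xi_{j_w},T_\cG\bigotimes_v\xi_{i_v}\rangle$, which by \eqref{eq:graph-operator} equals $\sum_{k:V\to\N}\prod_e\langle\xi_{k_{t(e)}},T_e\xi_{k_{s(e)}}\rangle=S(\cG)$. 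Since $\|\xi^v\|=(\dim\HH_v)^{1/2}$, Cauchy--Schwarz together with \eqref{eq:graph-norm-estimate} then yields \eqref{eq:graph-sum-input-output}.

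The principal obstacle is the bookkeeping in Part 1: verifying inductively, along the topological ordering, that after processing $v_1,\dots,v_k$ the intermediate operator's matrix elements equal the partial graph sum over all index assignments to those vertices consistent with the prescribed boundary indices, with the remaining vertex indices still free. Once this verification is in place the norm bound is essentially automatic, since every operator in the composition contributes either a factor of $\|T_e\|$ (for an edge) or a factor $\leq 1$ (for a vertex).
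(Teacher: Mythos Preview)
Your proposal is correct and follows essentially the same approach as the paper: both factor $T_\cG$ as a composition of the edge operators $T_e$ together with norm-one ``vertex'' partial isometries $\Delta_q\Delta_p^*=\sum_i |\xi_i^{\otimes q}\rangle\langle\xi_i^{\otimes p}|$ (the paper calls these $L_v$), then read off the norm bound from submultiplicativity and obtain Part~2 by Cauchy--Schwarz with $\|\xi^v\|=(\dim\HH_v)^{1/2}$. The only cosmetic difference is that you compose along a topological ordering with a moving frontier of edges, whereas the paper stratifies vertices by their maximal distance $d(v)$ from the inputs (inserting dummy identity edges so that every edge joins consecutive levels) and writes $T_\cG=L_rT_rL_{r-1}\cdots T_1L_0$; both organizations accomplish the same bookkeeping.
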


\begin{proof}

The key point is to observe that we can write the operator
$T_\cG$ as a composition of tensor products of the edge
operators $T_e$ and isometries corresponding to the internal
vertices. Every internal vertex has, by the definition of an
input-output graph, some incoming edges and some outgoing
edges, let's say $t$ incoming and $s$ outgoing (with
$t,s\geq 1$). Then the summation over the orthonormal basis
of $\HH_v$ for this internal vertex corresponds to an
application of the mapping $L_v: \HH^{\otimes t}_v \rightarrow
\HH^{\otimes s}_v$ given by
\[
L_v=\sum_{i=1}^{N_v}  \vert \xi_i^{\otimes s}\rangle\langle
\xi_i^{\otimes t}\vert.
\]
In terms of our basis we have for all $1 \leq i_1, \dots , i_t \leq N_v$
\[
L_v( \xi_{i_1} \otimes \cdots \otimes \xi_{i_t} ) =
\begin{cases}
\bigl(\xi_{i_1}\bigr)^{\otimes s} & 
\text{if $i_1 = \cdots =i_t$}\\
0 &  \text{otherwise.}
\end{cases}
\]
The mapping $L_v$ is, for all internal vertices $v$, a partial
isometry, and thus has norm equal to $1$.

It remains to put all the edge operators and the vertex
isometries together in a consistent way. For this, we have
to make sure that we can order the application of all these
operators in a linear way so that their composition
corresponds to the operator defined by
\eqref{eq:graph-operator}. However, this is guaranteed by
the input-output structure of our graph. We can think of our
graph as an algorithm, where we are feeding input vectors
into the input vertices and then operate them through the
graph, each edge doing some calculation, and each vertex
acting like a logic gate, doing some compatibility
checks. The main problem is the timing of the various
operations, in particular, how long one has to wait at a
vertex, before applying an operator on an outgoing edge. In
algorithmic terms, it is clear that one has to wait until
all the input information is processed; i.e. one has to wait
for information to arrive along the longest path from an
input vertex to the given vertex.

To formalize this, let us define a distance function
$d:V \to \{0,1,2,\dots\}$ on our graph $G$ which measures
the maximal distance from a vertex to a input vertex,
\[
d(v) := \max\left\{k\ \left|\ \vcenter{\hsize
  16em\raggedright there exists a directed path of length
  $k$ from some input vertex to $v$}\right.\right\}.
\]
The length of a path is the number of edges it uses. Note
that since an input vertex has no incoming edges, we have
$d(v)=0$ for all input vertices. The number $d(v)$ tells us
how long we should wait before we apply the isometry
corresponding to $v$; after $d(v)$ steps all information
from the input vertices has arrived at $v$.  Let $r$ be the
maximal distance (which is achieved for one of the output
vertices). The distance function $d$ gives us a
decomposition of the vertices $V$ of our graph into
disjoint level sets
\[
V_k:=\{v\in V \mid d(v)=k\},\qquad V=\bigcup_{k=0}^r
V_k.
\]
Note that, for any edge $e$, we have $d(t(e))\geq
d(s(e))+1$. In order to have a clearer notation it is
preferable if our edges connect only vertices which differ
in $d$ exactly by 1. This can easily be achieved by adding
vertices on edges for which this difference is bigger than
1. The new vertices have one incoming edge and one outgoing
edge. We have of course also to attach matrices to those
edges, and we do this in such a way that all incoming edges
of the new vertices get the identity matrix, the original
matrix $T_e$ is reserved for the last piece of our
decomposition. These new vertices will not change the
operator $T_{\cG}$ nor the graph sum $S(\cG)$. In the same
way we can insert some new vertices for all incoming edges
of the output vertices and thus arrange that every output
vertex $v$ has maximal possible distance $d(v)=r$. (Note
that there cannot be a directed path from one output vertex
to another output vertex, because an output vertex has no
outgoing edges.)

Thus we can assume without loss of generality that we have
$d(t(e))= d(s(e))+1$ for all edges $e\in E$ and that
$d(v)=r$ for all $v\in V_\outp$. We have now also a
decomposition of $E$ into a disjoint union of level sets,
\[
E_k:=\{e\in E \mid d(t(e))=k\},\qquad E = \bigcup_{k=1}^r
E_k.
\]
Edges from $E_k$ are connecting vertices from $V_{k-1}$ to
vertices from $V_k$.

Note that our Hilbert spaces correspond on one side to the
vertices, but on the other side also to the edges as source
and target Hilbert spaces; to make the latter clearer, let
us also write
\[
T_e:\HHs_e\to\HHt_e,
\]
where of course $\HHs_e$ is the same as $\HH_{s(e)}$ and
$\HHt_e$ is the same as $\HH_{t(e)}$.

We can now write
\begin{equation}\label{eq:operator-product}
T_\cG=L_r\cdot T_{r}\cdot L_{r-1}\cdot T_{r-1}\cdots L_1
\cdot T_1\cdot L_0,
\end{equation}
where $L_k$ is the tensor product of all partial isometries
corresponding to the vertices on level $k$, and $T_k$ is the
tensor product of all edge operators corresponding to the
edges on level $k$. More precisely,
\[
T_k:\bigotimes_{e\in E_k} \HHs_e \to \bigotimes_{e\in E_k}
\HHt_e
\]
is defined as
$$T_k:=\bigotimes_{e\in E_k} T_e;$$
whereas
$$L_k:=\bigotimes_{v\in V_k} L_v,$$
with the vertex partial isometry
\[
L_v:\bigotimes_{\substack{e\in E \\ t(e)=v}}\HHt_e\to
\bigotimes_{\substack{f\in E \\ s(f)=v}}\HHs_f
\]
given by
\[
L_v=\sum_{i = 1}^{N_v} 
\vert \xi_i^{\otimes s}\rangle
\langle \xi_i^{\otimes t}\vert,
\] 
where $s$ and $t$ are the number of edges which have $v$ as
their source and target, respectively.

Since we do not have incoming edges for $v\in V_\inp$ nor
outgoing edges for $v\in V_\outp$, one has to interpret
$L_0$ and $L_r$ in the right way. Namely, for $v\in V_\inp$,
the operator $L_v$ acts on
\[
L_v:\HH_v\to \bigotimes_{\substack{e\in E \\ s(e)=v}}\HHs_e
\]
given by
\[
L_v=\sum_{i=1}^{N_v} \vert \xi_i^{\otimes s}
\rangle\langle \xi_i\vert; 
\]
and similarly for $v\in V_\outp$.  (Formally, one can
include this also in the general formalism by adding one
incoming half-edge to each input vertex and one outgoing
half-edge to each output vertex.) With this convention, the
product given in \eqref{eq:operator-product} is an operator
from $\bigotimes_{v\in V_{\inp}}\HH_v$ to $\bigotimes_{w\in
  V_{\outp}}\HH_w$. It is clear that
\eqref{eq:operator-product} gives the same operator as
\eqref{eq:graph-operator}.

Now the factorization \eqref{eq:operator-product} and the
fact that all $L_v$ and thus all $L_k$ are partial
isometries yield
\[
\Vert T_\cG\Vert \leq \prod_{k=0}^r \Vert L_k\Vert\cdot
\prod_{k=1}^r \Vert T_k\Vert= \prod_{k=1}^r \Vert
T_k\Vert=\prod_{e\in E}\Vert T_e\Vert.
\]
This is the norm estimate \eqref{eq:graph-norm-estimate}
claimed for the operator $T_\cG$.

In order to get the estimate for the graph sum $S(\cG)$ we
have to note the difference between $T_\cG$ and $S(\cG)$:
for $T_\cG$ we sum only over the internal vertices and thus
remain with a matrix, indexed by the input and output
vertices; for $S(\cG)$ we also have to sum over these input
and output vertices. If we denote by
\[
\xi^v:=\sum_{i = 1}^{N_v} \xi_i \in \HH_v
\]
the sum over the vectors from our orthonormal basis of
$\HH_v$, then we have
\[
S(\cG)=\langle  \bigotimes_{w\in V_{\outp}}
\xi^w,T_\cG\bigotimes_{v\in V_{\inp}} \xi^v\rangle.
\]
An application of the Cauchy-Schwartz inequality yields then
\[
\vert S(\cG)\vert\leq \Vert T_\cG \Vert\cdot \prod_{v\in
  V_{\inp}} \Vert \xi^v\Vert\cdot \prod_{w\in V_{\outp}}
\Vert \xi^w\Vert.
\]
Since the norm of $\xi^v$ is, by Pythagoras's theorem, given
by $(\dim\HH_v)^{1/2}$, we get the graph sum estimate
\eqref{eq:graph-sum-input-output}.
\end{proof}


\section{Proof of the General Case}\label{sect:four}

Let us now consider a graph of matrices as in Theorem
\ref{thm:graph-sum-general}. The problem is that the
underlying graph $G$ might not be an input-output
graph. However, we have some freedom in modifying $G$
without changing the associated graph sum. First of all, we
can choose the directions of the edges arbitrarily, because
reversing the direction corresponds to replacing $T_e$ by
its transpose $T_e^t$. Since the norm of $T_e$ is the same as
the norm of $T_e^t$ the estimate for the modified graph will
be the same as the one for the old graph. More serious is
that, in order to apply Theorem
\ref{thm:graph-norm-estimate} we should also remove
directed cycles in $G$. This cannot, in general, be achieved
by just reversing some directions.  (As can clearly be seen
in the case of a loop.) The key observation for taking care
of this is that we can split a vertex $v$ into $v$ and $v'$
and redistribute at will the incoming and outgoing edges
from $v$ between $v$ and $v'$. We put one new edge $f$
between $v$ and $v'$ with the corresponding operator $T_f$
being the identity matrix. The constraint from $T_f$ in the
graph sum will be that after the splitting, the basis vector
for the vertex $v$ has to agree with the basis vector for
the vertex $v'$, so summation over them yields the same
result as summation over the basis of $\HH_v$ before the
splitting. Thus this splitting does not change the given
graph sum. Since the norm of the identity matrix is 1, this
modification will also not affect the wanted norm estimate.

One should of course also make sure that the forest
structure of the two-edge connected components is not
changed by such modifications. For the case of reversing
arrows this is clear; in the case of splitting vertices the
only problem might be that the new edge between $v$ and $v'$
is a cutting edge. This can actually happen, but only in the
case where $v$ constitutes a two-edge connected component by
itself. In that case, we do the splitting as before but add
two new edges between $v$ and $v'$, both with the same
orientation and both with the identity operator.

This motivates the following definition of the modification
of a graph of matrices.

\begin{definition}
We say that $\hat\cG=(\hat G,(\hat \HH_v)_{v\in \hat V},(\hat T_e)_{e\in \hat E})$ is a \emph{modification}
of $\cG=(G,(\HH_v)_{v\in V},(T_e)_{e\in E})$, if the
former can be obtained from the latter by finitely many
applications of the following operations:
\begin{itemize}
\item
change the direction of the arrow of an edge $e$ and replace
$T_e:\HH_{s(e)}\to \HH_{t(e)}$ by its transpose
$T_e^t:\HH_{t(e)}\to\HH_{s(e)}$
\item
split a vertex $v$ into two vertices $v$ and $v'$,
redistribute in some way the incoming and outgoing edges for
$v$ together with their matrices to $v$ and $v'$ and add a
new edge between $v$ and $v'$ with arbitrary direction for
this edge and the identity matrix attached to it; should $v$
be a two-edge connected component, then we add two edges
between $v$ and $v'$, both with the same orientation, and
both having the identity matrix attached to them
\end{itemize}
\end{definition}

Our discussion from above can then be summarized in the
following proposition.

\begin{proposition}
Let $\hat \cG=(\hat G,(\hat \HH_w)_{w\in \hat V},(\hat
T_f)_{f\in \hat E})$ be a modification of
$\cG=(G,(\HH_v)_{v\in V},(T_e)_{e\in E})$. Then we
have:
\begin{itemize}
\item
the graph sums are the same,
$$S(\cG)=S(\hat\cG);$$
\item
the forests of two-edge connected components are the same,
$$\cF(G)=\cF(\hat G);$$
\item
the product of the norm of the edge operators is the same,
$$\prod_{e\in E} \Vert T_e\Vert=\prod_{f\in \hat E}\Vert T_f\Vert.$$
\end{itemize}
Thus, in order to show the graph sum estimate
\eqref{eq:norm-sum-estimate-general} for $\cG$ it is enough
to prove this estimate for some modification $\hat\cG$.
\end{proposition}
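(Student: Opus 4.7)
The plan is to verify the three bulleted invariances separately under each of the two elementary modifications (edge reversal and vertex splitting), and then to obtain the general case by straightforward induction on the number of operations applied. All three quantities in question (the sum $S(\cG)$, the forest $\cF(G)$, and the product of edge-operator norms) behave locally under these moves, so each check is either a short identity or a short combinatorial observation.

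For edge reversal, the factor of \eqref{eq:graph-sum-general} attached to the reversed edge $e$ changes from $\langle \xi_{i_{t(e)}}, T_e\xi_{i_{s(e)}}\rangle$ to $\langle \xi_{i_{s(e)}}, T_e^t\xi_{i_{t(e)}}\rangle$; both equal the matrix entry $(T_e)_{i_{t(e)}, i_{s(e)}}$, so $S(\cG)$ is unchanged. The underlying undirected graph is unaffected, hence $\cF(G)$ is preserved, and $\Vert T_e^t\Vert=\Vert T_e\Vert$ preserves the norm product.

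For vertex splitting, the new identity edge $f$ between $v$ and $v'$ contributes a Kronecker delta $\langle \xi_{i_v}, I\xi_{i_{v'}}\rangle=\delta_{i_v,i_{v'}}$, which, together with the additional summation over $i_{v'}$, collapses back to the original summation over $i_v$; hence $S(\cG)$ is unchanged, and inserting identity operators clearly preserves the product of norms. The forest check is the delicate point: if $v$ belongs to a two-edge connected component $C$ with $|V(C)|\geq 2$, then for any redistribution leaving each of $v, v'$ with at least one original edge inside $C$, there is a path from $v$ to $v'$ through $C\setminus\{v\}$, which together with $f$ forms a cycle, so $f$ is not a cutting edge and $C$ (with $v$ split into $v, v'$) remains a single two-edge connected component. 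If $v$ is a two-edge connected component by itself, then the convention of adding two parallel identity edges rather than one guarantees the same conclusion, since those two parallel edges themselves form a cycle. In either case, contracting the inserted identity edge(s) recovers the original vertex and shows $\cF(\hat G)=\cF(G)$.

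The main obstacle — and the reason the definition of modification carries the special clause for the isolated case — is precisely this forest invariance: a single identity edge between $v$ and $v'$ would be a cutting edge whenever $v$ was alone in its component, adding a spurious trivial leaf to $\cF(\hat G)$ and falsifying the desired bound. Once the three invariances are established, the final sentence of the proposition is immediate: the right-hand side of \eqref{eq:norm-sum-estimate-general} for $\hat\cG$ literally equals the right-hand side for $\cG$, since a split vertex $v'$ inherits the Hilbert space of $v$ and so the maximum of $\dim\HH_v$ over each leaf component is unchanged, while the left-hand sides agree by the first invariance.
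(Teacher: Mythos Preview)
Your argument is correct and mirrors the paper's own treatment: the paper does not give a separate proof of this proposition but simply summarizes the discussion immediately preceding the definition of modification, which checks exactly the same three invariances under the two elementary moves in the same way you do. Your write-up is in fact a bit more careful on the forest invariance, making explicit the tacit assumption that in a split each of $v,v'$ receives at least one edge from the ambient two-edge connected component (without which the new edge $f$ could become a cutting edge even when $v$ is not a singleton component); the paper leaves this implicit in the word ``redistribute''.
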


So the crucial step for the proof of Theorem
\ref{thm:graph-sum-general} is now to modify a given graph
$G$ to an input-output graph $\hat G$ with the right number
of input and output vertices.

\begin{proposition}
Let $\cG$ be a graph of matrices. Then there exists a
modification $\hat \cG$ such that the underlying graph $\hat
G$ of the modification is an input-output
graph.

Furthermore, the input and output vertices can be
chosen such that: for each non-trivial tree of the forest 
$\cF(G) (= \cF(\hat G))$ we have one leaf
as input leaf and all the other leaves as output leaves.
For a trivial tree, the trivial leaf is considered both as
input and output leaf. The input vertices of $\hat G$
shall consist of one vertex from each input leaf, and the
output vertices shall consist of one vertex from each output leaf.
\end{proposition}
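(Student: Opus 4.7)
The plan is to build $\hat\cG$ in two stages. First, I would use the tree structure of $\cF(G)$ to fix a global orientation on the cutting edges: root each non-trivial tree at its chosen input leaf and direct every cutting edge from parent component toward child component. Reversing an edge is a permitted modification (with $T_e$ replaced by $T_e^t$), so this is free. The rooted orientation produces, for each two-edge connected component $C$ of $G$, a collection of in-ports (vertices incident to incoming cutting edges) and out-ports (vertices incident to outgoing cutting edges). For each leaf of $\cF(G)$ I additionally mark one vertex of the corresponding $C$ to become an input vertex of $\hat G$ (if the leaf is an input leaf), an output vertex (if it is an output leaf), or both (if the leaf is trivial; when $|C|=1$ this requires a preliminary split using the two-parallel-edge variant, so that no new cutting edge is created).

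The main technical stage is carried out independently inside each $C$: orient its internal edges and split vertices as needed so that the induced directed subgraph on $C$ becomes acyclic, with in-ports and designated input vertices as sources and out-ports and designated output vertices as sinks, and every remaining vertex having both an incoming and an outgoing internal edge. The tool I would use is an open ear decomposition of the two-edge connected graph $C$ anchored at one of its in-ports: start with a simple cycle through that port, split the port into two copies joined by an identity edge so that the cycle becomes a directed path between them, then adjoin ears one at a time and orient each as a directed path between its two anchors in the partial DAG. Whenever adding an ear would close a directed cycle, split an endpoint $v$ on the offending cycle into $v,v'$ joined by an identity edge, thereby breaking the cycle. Because we are working strictly inside a two-edge connected sub-structure, each such identity edge automatically has an alternate path through the rest of $C$ and so is not a cutting edge, which preserves $\cF(\hat G)=\cF(G)$.

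The principal obstacle is exactly the preservation clause in the previous paragraph: every split must live inside a piece that is still two-edge connected in $\hat G$, otherwise the new identity edge would become a cutting edge and corrupt the forest. This is precisely why two-edge connectedness of $C$ is the right hypothesis; by Menger's theorem it provides two edge-disjoint paths between any pair of vertices, so a split along a cycle is always safe. Once the internal modifications have been completed in every component, verification of the four axioms of Definition~\ref{def:input-output} is routine: global acyclicity follows from local acyclicity inside each $C$ combined with the tree orientation of cutting edges; every vertex lies on a directed path from some input to some output by concatenating an in-port-to-out-port route in its component with the rooted path through the forest; the internal-degree condition is built into the ear construction; and any designated $v_\inp$ or $v_\outp$ that still carries an unwanted incident edge is rendered a pure source or sink by one further split.
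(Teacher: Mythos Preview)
Your proposal is correct and follows essentially the same approach as the paper: orient the cutting edges via the rooted tree structure of $\cF(G)$, then handle each two-edge connected component separately by an incremental ear/path construction with vertex splitting to kill directed cycles. The only cosmetic differences are that the paper starts each component with a simple \emph{path} between its designated input and output vertex (rather than a cycle through an in-port that is then split), and the paper reduces to exactly one input and one output vertex per component (the other out-ports being internal), which slightly simplifies the per-component lemma.
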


\begin{proof}
Clearly we can assume that the underlying graph $G$ of $\cG$
is connected, because otherwise we do the following
algorithm separately for each connected component.

For such a connected $G$, consider the tree of its two-edge
connected components. Declare arbitrarily one leaf as
\emph{input leaf}, all the other leaves as \emph{output
  leaves}; if the tree is trivial, we declare its only leaf
both as input and output leaf. Furthermore, we choose an
arbitrary vertex from the input leaf as input vertex, and 
for our output vertices we choose an
arbitrary vertex from each output leaf. The direction from input leaf to output leaves
defines uniquely a flow in our tree from the input leaf to
the output leaves, i.e., this gives us a direction for the
cutting edges of $G$.

For each two-edge connected component we define now one
input vertex and one output vertex. For the input leaf we
have already chosen the input vertex; its output vertex is
the source vertex of one (arbitrarily chosen) of the
outgoing cutting edges. For the output leaves we have
already chosen their output vertices; as input vertex we
take the target vertex of the (unique) incoming cutting
edge. For all the other, non-leaf, components we choose the
target vertex of the (unique) incoming cutting edge as input
vertex and the source vertex of one (arbitrarily chosen) of
the outgoing cutting edges as the output vertex. We want all
those input and output vertices to be different, which can
be achieved by splitting, if necessary, some of them into
two.

So now each two-edge connected component has one input
vertex and one output vertex. If we are able to modify each
two edge connected component in such a way that it is an
input-output graph with respect to its input and output
vertex, then by putting the two-edge connected components
together and declaring all input vertices but the one from
the input leaf and all output vertices but the ones from the
output leaves as internal indices, we get the modification
$\hat G$ with the claimed properties. It only remains to do
the modification of the two-edge connected components. This
will be dealt with in the next lemma.
\end{proof}

\begin{lemma}
Let $\cG$ be a graph of matrices and assume that the
underlying graph $G$ is two-edge connected. Let $v$ and $w$
be two disjoint vertices from $G$. Then there exists a
modification $\hat\cG$ of $\cG$, such that the underlying
graph $\hat G$ of the modification is an input-output graph,
with input vertex $v$ and output vertex $w$.
\end{lemma}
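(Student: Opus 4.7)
The plan is to use an ear decomposition of the $2$-edge connected graph $G$ and build up the required input-output orientation ear by ear, using the two allowed modifications (edge reversal and vertex splitting) to handle each obstruction as it arises.

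Since $G$ is $2$-edge connected, the edge version of Menger's theorem produces two edge-disjoint paths $P_1, P_2$ from $v$ to $w$; together they form a cycle $C_0$ through both. By the classical ear-decomposition theorem, $G$ can be built up as $G_0 \subset G_1 \subset \cdots \subset G_n = G$ with $G_0 = C_0$ and each $G_i = G_{i-1}\cup Q_i$, where $Q_i$ is either an \emph{open ear} (a path with distinct endpoints in $G_{i-1}$ and all internal vertices new) or a \emph{closed ear} (a cycle attached to $G_{i-1}$ at a single vertex). I would start by orienting both $P_1$ and $P_2$ from $v$ to $w$, reversing edges and transposing the attached matrices as needed; this makes $G_0$ an input-output graph with source $v$ and sink $w$.

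The ears are then processed one at a time, maintaining the invariant that the current graph (possibly after splittings) is an input-output graph with source $v$ and sink $w$, equipped with a fixed topological order on its vertices. For an open ear with endpoints $a \neq b$, I would orient it from the topologically earlier endpoint to the later one (or in either direction if they are incomparable), and insert its internal vertices into the topological order between them; this preserves acyclicity and makes each new internal vertex have exactly one incoming and one outgoing edge. For a closed ear attached at a vertex $a$, I would first split $a$ into two copies $a_1,a_2$ connected by a new identity edge oriented $a_1\to a_2$, redistribute the edges at $a$ so that one endpoint of the ear attaches to $a_1$ and the other to $a_2$, and orient the ear from $a_1$ to $a_2$; this breaks the would-be directed cycle while leaving $S(\cG)$ and the forest of two-edge connected components unchanged.

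The main obstacle is the case where an ear endpoint coincides with $v$ or $w$, since $v$ must retain only outgoing edges and $w$ only incoming edges, whereas the new ear might attach in the wrong direction. I would handle such occurrences by preemptively splitting $v$ (resp.\ $w$) into the designated source (resp.\ sink) plus an internal copy connected by an identity edge, and attaching the ear to the internal copy; after this preparatory split, the clauses of the previous paragraph apply verbatim. A final check is that every vertex lies on a directed path from $v$ to $w$: this follows inductively because both endpoints of each added ear already lie on such a path (by the invariant on $G_{i-1}$), and the internal vertices of the ear are strung out along the directed segment between them.
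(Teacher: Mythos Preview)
Your proposal is correct and follows essentially the same approach as the paper: build up the graph through an ear decomposition, orient each ear consistently with the partial order already established, and split a vertex whenever a closed ear would create a directed cycle. The paper's proof differs only cosmetically---it starts the decomposition from a single simple $v$--$w$ path rather than a cycle (so Menger's theorem is not needed), and it phrases the orientation rule in terms of whether a directed path already exists between the two attachment points rather than in terms of an explicit topological order; your explicit handling of ears attaching at $v$ or $w$ is something the paper leaves implicit.
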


\begin{proof}
The proof of this can be found in \cite[Ch. 11]{BS}. Let us
recall the main steps. One builds a sequence $G_k$ of
input-output graphs (all with $v$ as input vertex and $w$ as
output vertex) such that each step is manageable and that
the last graph is the wanted one. For this construction we
ignore the given orientation of the edges of $G$, but will
just use the information from $G$ as undirected graph; then
we will choose convenient orientations for the edges when
constructing the sequence $G_k$.

First, we choose a simple path (i.e., a path without
cycles), in our graph $G$ from $v$ to $w$. We direct all
edges on this path from $v$ to $w$. This path with this
orientation of edges is our first input-output graph $G_1$.

Assume now we have constructed an input-output graph
$G_k$. If this is not yet the whole graph, then we can
choose an edge $e$ which is not part of $G_k$ and which has
one of its vertices, say $x$, on $G_k$. Let us denote the
other vertex of $e$ by $z$. Then one can find a simple path
in $G$ which connects $z$ with $G_k$ and does not use
$e$. (This is possible, because otherwise $e$ would be a
cutting edge.) Denote the end point of this path (lying on
$G_k$) by $y$. (Note that $y$ might be the same as $z$.) We
have now to direct this path between $x$ and $y$. If
$x\not=y$, then there was:
\begin{enumerate}
\item
either a directed path from $x$ to $y$ in $G_k$, in which
case we direct the new path also from $x$ and $y$;
\item
or a directed path from $y$ to $x$ in $G_k$, in which case
we direct the new path also from $y$ and $x$;
\item
or there was no such path in $G_k$, in which case we can
choose any of the two orientations for the new path between
$x$ and $y$.
\end{enumerate}
(Note that the first and second case cannot occur
simultaneously, because otherwise we would have had a
directed cycle in $G_k$.)

The only problematic case is when $x=y$, i.e., when the new
path is actually a cycle. In this case we split the vertex
$x=y$ into two different vertices, $x$ and $y$; $x$ gets all
the incoming edges from $G_k$ and $y$ gets all the outgoing
edges from $G_k$, and the new edge is directed from $x$ to
$y$. Furthermore, the new cycle becomes now a directed path
from $x$ to $y$.

Our new graph $G_{k+1}$ is now given by $G_k$ (possibly
modified by the splitting of $x$ into $x$ and $y$) together
with the new path from $x$ to $y$. It is quite easy to see
that $G_{k+1}$ is again an input-output graph, with the same
input vertex and output vertex as $G_k$.

We repeat this adjoining of edges until we have exhausted
our original graph $G$, in which case our last input-output
graph is the wanted modification.
\end{proof}


\section{Proof of Optimality}\label{sect:five}
In order to show the second part of Theorem
\ref{thm:graph-sum}, that our exponent $\rexp(G)$ is
optimal, we just have to adapt the corresponding
considerations in Example \ref{example} to the general
case. For a given graph we attach to each non-cutting edge
the identity matrix; thus all indices in a two-edge
connected component of $G$ get identified and we reduce the
problem to the case that $G$ is a forest. Since it suffices
to look on the components separately, we can thus assume
that $G$ is a tree. If this tree is trivial, then we have no
cutting edges left and we clearly get a factor $N$.
Otherwise, we put an orientation on our tree by declaring
one leaf as input leaf and all the other leaves as output
leaves. Then we attach the following matrices to the edges
of this tree
\[
T_e=\begin{cases} V^t,& \text{if $e$ joins the input leaf
  with an internal vertex}\\ V,&\text{if $e$ joins an output
  leaf with an internal vertex}\\ 1,&\text{otherwise}
\end{cases},
\]
where $V$ is the matrix given in \eqref{eq:V}. Again, it is
straightforward to see that this choice forces every index
corresponding to an internal vertex to be equal to 1,
whereas there is no restriction for the indices
corresponding to the leaves; taking into account also the
$1/\sqrt N$ factors from the operators $V$, we will get in
the end $N^{\#\text{leaves}/2}$ for the sum.

\section*{acknowledgments}
The authors would like to thank Van Vu for his comments on a preliminary version of the paper.

\end{document}